 \newtheorem{thm}{Theorem}[section]
 \newtheorem{lem}[thm]{Lemma}
 \newtheorem{prop}[thm]{Proposition}
 \theoremstyle{definition}
 \theoremstyle{remark}
 \numberwithin{equation}{section}
\newcommand{\nc}[2]{\newcommand{#1}{#2}}
\newcommand{\rnc}[2]{\renewcommand{#1}{#2}}
\nc{\wegengruen}{\end{equation}}
\newcommand{\Z}{\mathbb{Z}}
\newcommand{\N}{\mathbb{N}}
\newcommand{\R}{\mathbb{R}}
\newcommand{\C}{\mathbb{C}}
 \newcommand{\hsp}{{\hspace{-1pt}}}
\newcommand{\hs}{{\hspace{1pt}}}
\newcommand{\cD}{\mathcal{D}}
\newcommand{\hH}{\mathcal{H}}    
\newcommand{\K}{{\mathcal{K}}}
\newcommand{\cO}{\mathcal{O}}
\newcommand{\T}{{\mathcal{T}}}
\newcommand{\A}{{\mathcal{A}}}
\newcommand{\F}{{\mathcal{F}}}
\newcommand{\oqd}{\mathbb{D}_q}
\newcommand{\Uq}{\cO(\oqd)}  
\newcommand{\Fq}{\F(\oqd)} 
\newcommand{\dFq}{\F^{(1)}(\oqd)} 
\newcommand{\dd}{\mathrm{d}}
\newcommand{\im}{\mathrm{i}}
\newcommand{\E}{\mathrm{e}}
\newcommand{\id}{{\mathrm{id}}}
\newcommand{\Tr}{\mathrm{Tr}}
\newcommand{\spec}{\mathrm{spec}}
\newcommand{\dom}{\mathrm{dom}}
\newcommand{\lN}{\ell_2(\N)}
\newcommand{\lZ}{\ell_2(\Z)}
\newcommand{\s}{\sigma}
\renewcommand{\a}{\alpha}
\newcommand{\half}{\mbox{$\frac{1}{2}$}}
\newcommand{\dz}{\mbox{$\frac{\partial}{\partial z}$}}
\newcommand{\dbz}{\mbox{$\frac{\partial}{\partial \bar z}$}}
\newcommand{\dt}{\mbox{$\frac{\partial}{\partial t}$}}
\newcommand{\ip}[2]{\langle{#1},{#2}\rangle} 
\newcommand{\ra}{\rightarrow}
\newcommand{\lra}{\longrightarrow}
\newcommand{\ot}{\otimes}
\newcommand{\op}{\mathrm{op}}
\def\trho{\tilde\rho} 
    \def\Aq{\cO(\mathrm{SU}_q(2))} 
\def\SU{\mathrm{SU}(2)}   
\def\CSUq{C(\mathrm{SU}_q(2))} 
\def\su{\mathrm{su}(2)}
\def\pr{\mathrm{pr}}
\renewcommand{\S}{\mathbb{S}}
\newcommand{\disc}{\mathbb{D}}
\newcommand{\qd}{\bar{\mathbb{D}}_q}
\newcommand{\xx}{\mathbf{x}}
\title{Twisted Dirac operator on quantum SU(2) in disc coordinates}
\author{Ulrich Kr\"ahmer}
\address{Technical University Dresden \\
Institute of Geometry \\
01062 Dresden \\
Germany}
\email{ulrich.kraehmer@tu-dresden.de}
\author{Elmar Wagner}
\address{Instituto de F\'isica y Matem\'aticas\\
Universidad Michoacana de San Nicol\'as de Hidalgo\\ 
58040 Morelia\\ M\'exico} 
\email{elmar@ifm.umich.mx} 
\subjclass[2010]{Primary 58B34; Secondary 58B32}
\keywords{Dirac operator, twisted derivation, quantum SU(2), quantum disc}
\dedicatory{Dedicated to Professor Nikolai Vasilevski on occasion of his 70th birthday.}
\begin{document}

\begin{abstract}
The quantum disc is used to define 
a noncommutative analogue of a  
dense coordinate chart and of
left-invariant vector fields on quantum SU(2).
This yields two twisted Dirac operators 
for different twists  
that are related by a gauge
transformaton and have bounded twisted
commutators with a suitable algebra of 
differentiable functions on quantum
SU(2). 
\end{abstract}

\maketitle

\section{Introduction} 
Connes' noncommutative differential geometry 
\cite{C-buch,C-ihes} provides in particular a geometric
approach to the construction of K-homology classes of a
$C^*$-algebra $\A$: for the commutative $C^*$-algebra
of continuous functions on a compact smooth manifold,
the phase $F:=\frac{D}{|D|}$ of an
elliptic first order differential operator $D$ on a
vector bundle defines such a class,
and for a noncommutative algebra, the fundamental task is to represent $\A$ 
on a Hilbert space $\hH$ and to find a
self-adjoint operator $D$ that has compact resolvent
(so it is ``very'' unbounded) 
but at the same time has bounded commutators with the
elements of $\A$. 

In classical geometry, equivariant differential
operators on Lie groups 
provide examples that can be described purely in
terms of representation theory, so since the
discovery of quantum groups, many attempts were made to apply Connes'
programme to these $C^*$-algebras. The fact that their 
representation theory resembles that of their classical counterparts so closely 
allows one indeed to define straightforwardly an
analogue say of the Dirac
operator on a compact simple Lie group, but it turns
out to have unbounded commutators with the elements
of $\A$.

Many solutions to this conundrum were found and
studied, focusing on various approaches and motivations
ranging from index theory 
\cite{DLSSV1, DLSSV2, C-index, CP, KRS, NT, NT1} 
over the theory of covariant differential calculi 
\cite{H,S} to the Baum-Connes conjecture 
\cite{V-BCc}.
However, it seems
fair to say that there is still no sufficient general
understanding of how Connes' machinery applies to
algebras obtained by deformation quantisation in
general and quantum groups in particular.  

The aim of the present note is to use
the fundamental example of $\SU$ for
discussing yet another mechanism for
obtaining bounded commutators. In a
nutshell, the idea is to have a
representation of $\A \otimes \A^\op$
on $\hH$ and to use differential
operators $D$ with ``coefficients'' in
$\A^\op$ to achieve bounded commutators
with $\A$. Our starting point 
is a noncommutative
analogue of a dense coordinate chart
on $\SU$ that is compatible 
with the symplectic
foliation of the quantised Poisson
manifold $\SU$. The noncommutative
analogue is obtained by replacing a
complex unit disc by the quantum disc. 
We use a quantised differential calculus on 
this chart to 
define quantisations of left
invariant vector fields that act on the
function algebra by twisted derivations.
This is where it becomes necessary to
consider coefficients from $\A^\op$. 

We then build two twisted Dirac
operators using these twisted
derivations and show that they are
related by a gauge transformation 
that arises from a rescaling of the
volume form. A fruitful
direction of further research might be to
investigate the spectral and homological
properties of these and similar
operators.

\section{The Dirac operator on $\SU$} 
The $C^*$-algebra $\A$ we are going to consider is a
strict deformation quantisation 
of the algebra of continuous complex-valued 
functions on the Lie group 
$$
\SU=\left\{ \left( \begin{array}{rr}
\alpha\ & \beta \\
-\bar\beta\ & \bar\alpha \\
\end{array} \right) \mid   \alpha,\beta\in\C,\ \alpha\bar
\alpha + \beta \bar\beta = 1 \right\}
$$
that we identify as usual with $\S^3 \subset \C^2$,
identifying the above matrix with $(\alpha,\beta)$. 

We denote by 
$$
X_0 := \left( \begin{array}{rr}
\im\ & 0 \\
0\ & -\im \\
\end{array} \right),\quad 
X_1 :=  \left( \begin{array}{rr}
0\ & 1 \\
-1\ & 0 \\
\end{array} \right),  \quad 
X_2:= \left( \begin{array}{rr}
0\ & \im \\
\im\ & 0 \\
\end{array} \right) 
$$
the standard generators of the Lie algebra $\su$ of 
$\SU$, and, 
by a slight abuse of notation, also the corresponding 
left invariant vector fields on $\SU$. 

In this section, we describe the Dirac operator $D$ of
$\SU$ in local coordinates that are adapted to the
quantisation process. To define the local coordinates,   
consider the map 
\[\label{lc}
	\bar \disc \times \S^1 \rightarrow 
	\S^3,\quad
	(z,v) \mapsto 
	(z,\sqrt{1-z \bar z}\hs v),
\]
where $\disc:=\{ z\in \C : |z|<1\}$ is the open unit disc,
$\bar \disc$ is its closure, and $\S^1=\partial\disc$ 
is its boundary.  
Restricting the map to
$(\disc \times \S^1)\setminus(\disc \times\{-1\}) \cong 
\disc \times  (-\pi, \pi)$ 
defines a dense coordinate chart 
\[\label{x}
	\xx \colon \disc \times (-\pi, \pi) 
	\, \lra \, \S^3, \quad \xx(z,t)  := 
	(z, \sqrt{1-z\bar z}\, \E^{\im t}) 
\]
that is 
compatible with the standard 
differential structure on 
$\SU \cong \S^3$. 
The pull-back of the bi-invariant 
volume form on $\S^3$ assigns a measure to $\disc \times (-\pi,\pi)$ 
and the resulting Hilbert
space of $L_2$-functions will be denoted by $\hH$.

We write $f(z, t):= f\circ \xx(z,t)$ for functions 
$f$ on $\S^3$ and thus identify these with continuous
functions on $\bar\disc \times [-\pi,\pi]$ satisfying
the boundary conditions 
\[\label{CSU} 
 	f(u, t) = f(u, 0),\quad 
	f(z,-\pi)=
	f(z,\pi)\quad \forall \ u \in \S^1,\ z \in \disc, \ 
	t\in [-\pi, \pi]. 
\]
Let $\Gamma^{(1)}(\SU)$ denote the set of 
$C^{(1)}$-functions (continuously differentiable ones) on  
$\bar \disc \times [-\pi, \pi] $ 
satisfying \eqref{CSU}.  
The corresponding functions on $\S^3$ are not
necessarily $C^{(1)}$, but  
absolutely continuous, and can therefore be considered 
as belonging to the domain 
of the first order differential operators 
$X_0$, $X_1$ and $X_2$. 
Here, derivations
are understood to be taken in the weak
sense.
Therefore we may consider 
$$
H:= -\im X_0,  \quad E:= \half (X_1 - \im X_2), \quad F:= -  \half (X_1 +\im X_2)
$$
 as first order differential operators on $\hH$ with domain $\Gamma^{(1)}(\SU)$. 
 A direct calculation shows that these operators take in the parametrisation \eqref{x} the following form: 
 \begin{align}
H &= z \dz - \bar z \dbz + \im \dt,  \label{H} \\
E &= -  \sqrt{1-\bar z z} \,\E^{-\im t} \dbz - 
 \mbox{$\frac{\im}{2}$}\hs \mbox{$\frac{z}{\sqrt{1-\bar z z }}$}\hs  \E^{-\im t} \hs \dt,  \label{E} \\
F &=   \sqrt{1-\bar z z} \,\E^{\im t} \dz - 
 \mbox{$\frac{\im}{2}$}\hs \mbox{$\frac{\bar z}{\sqrt{1-\bar z z }}$}\hs  \E^{\im t} \hs \dt .  \label{F} 
\end{align}

Since $\SU$ is a Lie group, its tangent bundle is
trivial and hence admits a trivial spin structure.  
We consider 
$\Gamma^{(1)}(S) := \Gamma^{(1)}(\SU)\oplus \Gamma^{(1)}(\SU)$ 
as a vector space of differentiable sections (in the 
weak sense) of the associated spinor bundle. 
The Dirac operator 
with respect to the bi-invariant metric on $\SU$ 
is then given by the closure of 
$$
D:=\left( \begin{array}{cc}
H-2\ & E \\
F\ & -H-2 \\
\end{array} \right)\ : \ \Gamma^{(1)}(S) \subset \hH\oplus \hH \ \lra\  \hH\oplus \hH, 
$$
see e.g.~\cite{F}. 

\section{A representation of
quantum SU(2) by multiplication
operators} 


The quantised coordinate ring of 
$\SU$ at $q \in (0,1)$ is
the universal unital *-al\-ge\-bra $\Aq$ 
containing elements $a,c$
such that
\begin{eqnarray*}
&& 
        ac=qca,\quad
        ac^*=qc^*a,\quad
        cc^*=c^*c,\\
&&   
        aa^*+q^2cc^*=1,\quad a^*a+ cc^*=1.  
\end{eqnarray*}
It admits a faithful Hilbert space representation  $\rho$ on $ \lN\ot \lZ$ 
given on orthonormal bases $\{e_n\}_{n\in{\N}}\subset \lN$ and $\{b_k\}_{k\in\Z}\subset \lZ$ by 
\begin{align} 
& \rho(a) (e_n\ot b_k) = \sqrt{1 - q^{2n}} \hs e_{n-1} \ot b_k, \label{a} \\ 
& \rho(c) (e_n\ot b_k)  = q^n \hs e_{n} \ot b_{k-1}.  \label{c}
\end{align} 
The norm closure of the *-algebra generated by $ \rho(a),  \rho(c)\in B( \lN\ot \lZ)$  
is isomorphic to $\CSUq$,  the universal  C*-algebra of $\Aq$, see e.g.\ \cite{MNW}.  

The starting point of this paper is
a quantum counterpart to the chart
\eqref{lc}. To define it, let 
$z\in B(\lN)$ and $u\in B(\lZ)$ be given by 
\[ \label{zu}
 z\hs e_n := \sqrt{1 - q^{2n}} \hs e_{n-1}, \ \ n\in\N, \qquad u\hs b_k:= b_{k-1},\ \ k\in\Z, 
\] 
and set 
\[\label{y} 
y := \sqrt{1- z^*z}\in B(\lN). 
\]
Then $y$ is a positive self-adjoint trace class operator on $\lN$ acting by 
$$
y\hs e_n = q^n \hs e_n
$$ 
and satisfying the relations 
\[ \label{zy} 
 z y  = q y z, \quad y z^* = q z^* y.  
\]
Note that one can now rewrite Equations \eqref{a} and \eqref{c} as 
\[   \label{zyu}
\rho(a) = (z\ot 1)\hs (e_n \ot b_n), \qquad \rho(c) = (y\ot u) \hs (e_n \ot   b_n). 
\] 

The bilateral shift $u$ generates a commutative C*-subalgebra of $B(\lZ)$ which is isomorphic to $C(\S^1)$.
The operator $z\in B(\lN)$ satisfies the defining relation of the quantum disc algebra $\Uq$, 
\[
  zz^* - q^2 z^*z = 1-q^2.   \label{qdr}
\]
It is known \cite{KL} that the universal C*-algebra of the quantum disc $\Uq$, 
generated by a single generator and its adjoint satisfying 
\eqref{qdr}, is isomorphic to the Toeplitz algebra $\T$ which can also be viewed as the C*-subalgebra of $B(\lN)$ 
generated by the unilateral shift
\[ \label{so}
 s\hs e_n = e_{n+1} , \quad n\in\N. 
\] 
Moreover, the bounded operator $z$ defined in \eqref{zu} also generates the Toeplitz algebra $\T\subset B(\lN)$,  
and the so-called symbol map of the Toeplitz extension \cite{V} 
\begin{equation}   \label{ext}
\xymatrix{
 0\;\ar[r]&\; \K(\lN) \;\ar@{^{(}->}[r] &\; \T
\;\ar[r]^{\!\!\tau} &\; C(\S^1) \;\ar[r] &\,0}
\end{equation}
can be given by $\tau(z)=\tau(s)=u$, where $u$ denotes the unitary generator of $C(\S^1)$ 
and $\K(\lN)$ stands for the C*-algebra of compact operators on $\lN$.

Returning to the map \eqref{lc}, observe that $\SU\cong \S^3$ is homeomorphic to the 
topological quotient of $\bar \disc \times \S^1$ given by shrinking the circle $\S^1$ to a point 
on the boundary of $\bar \disc$. This can be visualised by the following push-out diagram: 
\begin{align*}\mbox{ }  \\[-18pt]
\xymatrix@=3mm@R=1mm{ 
&&\ \S^3&&\\
 \chi_1(z,v):=(z, \sqrt{1-z \bar z}\hs v)  \;&&&&\;
\chi_2(u):=(u,0)\\
&\bar\disc\times \S^1 \ar[ruu]^{\chi_1}
&&\S^1 \ar[luu]_{\chi_2}& \\
(\iota,\id)(u,v):=(u,v)\;&&&&\;\pr_1(u,v):=u.\\
&&\S^1 \times \S^1 \ar[ruu]_{\pr_1}\ar[luu]^{(\iota,\id)}&&
}
\end{align*}

Applying the functor that assigns to a topological space the algebra of continuous functions, 
we obtain a pull-back diagram of C*-algebras. 
Quantum SU(2) is now obtained by
replacing in this pull-back diagram   
the C*-algebra $C(\bar \disc)$ by the
Toeplitz algebra $C(\bar\disc_q):=\T$,
regarded as the algebra of continuous
functions on the quantum disc. 
The restriction map $\iota^*:  C(\bar \disc)\lra C(\S^1)$, \,$\iota^*(f)=f\!\!\upharpoonright_{\S^1}$, is 
replaced by the symbol map $\tau: \T\lra C(\S^1)$ from \eqref{ext}.
The resulting pull-back diagram has the following structure: 
\begin{equation*} 
\xymatrix{ \nonumber
& \makebox[48pt][c]{
$P\,:=\,\big(C(\qd)\otimes C(\S^1)\big) \hspace{0pt}{\underset{(\pi_1,\pi_2)}{\times}}\hspace{0pt} C(\S^1)
\hspace{74pt}\mbox{ }$}
\ar[dl]_{\mathrm{pr}_1} \ar[dr]^{\mathrm{pr}_2}& \\
C(\qd) \otimes C(\S^1) \ar[rd]_{\pi_1:=\tau \hs \ot\hs \id} &    &
C(\S^1) \ar[ld]^{\pi_2:=\id\hs \ot\hs 1}\\
 &  C(\S^1)\otimes C(\S^1)& 
}
\end{equation*}
Note that 
$ (t\ot f,g)\in P$
if and only if $\tau(t)\ot f \hsp=\hsp g \ot 1 \in C(\S^1)\ot \C\hs 1\subset C(\S^1)\ot C(\S^1)$. 
Since $\C\hs 1 \cong \C(\{\text{pt}\})$, the interpretation of $\tau(t)\ot f = g \ot 1$ is that, whenever we evaluate  
$t\in C(\qd)$ on the boundary, the circle $\S^1$ in $\qd\times \S^1$ collapses to a point.

Moreover, it can be shown \cite[Section 3.2]{HW} that $\mathrm{pr}_1$ yields an isomorphism of \mbox{C*-algebras} 
$P\cong \mathrm{pr}_1(P) \cong \CSUq
\subset B( \lN\ot \lZ)$
such that $\rho(a)=$ \mbox{$\mathrm{pr}_1((z\ot 1, u))$} and $\rho(c)= \mathrm{pr}_1( (y\ot u, 0))$, 
see \eqref{zyu}. 

Viewing $\CSUq$ as a subalgebra of $C(\qd) \otimes C(\S^1)$ allows us to 
construct the following 
faithful Hilbert space representation of 
the C*-algebra $\CSUq$ in which it acts by 
multiplication operators on a
noncommutative function 
algebra. This leads to an interpretation
as an algebra of 
integrable functions on the quantum space 
$\oqd \times \S^1 $.  First note that, since $z^*z= 1-y^2$ and $zz^*= 1- q^{2}y^2$, any 
element $p\in \Uq$ can be written as
$$
p=\sum_{n=0}^N z^{*n}  p_n(y) + \sum_{n=1}^M p_{-n} (y)\hs  z^{n} , \quad N,M\in\N, 
$$
with polynomials $p_n$ and $p_{-n}$. Using the functional calculus of the  self-adjoint 
operator $y$ with spectrum $\spec(y)= \{q^n: n\in\N\} \cup \{0\}$, we define 
$$
\Fq:= \left\{\sum_{n=0}^N z^{*n} f_n(y) + \sum_{n=1}^M f_{-n} (y)\hs  z^{n} : \ N,M\in\N, \ \ 
 f_k \in L_\infty(\spec(y))\right\}.
$$
Using the commutation relations 
$$
z f(y) = f(qy) z, \quad f(y) z^* = z^* f(qy), \quad f \in L_\infty(\spec(y)), 
$$
one easily verifies that $\Fq$ is a *-algebra. Let $s$ 
denote the unilateral shift operator on $\lN$ from Equation \eqref{so}. 
For all functions  $f \in L_\infty(\spec(y))$, 
it satisfies the commutation relations 
$$
s^* f(y) = f(qy) s^*, \quad f(y) s = s f(qy).
$$
Writing $z$ in its polar decomposition $z=s^*\hs |z| = s^* \sqrt {1-y^2}$, one sees that 
$$
\Fq= \left\{\sum_{n=0}^N s^n f_n(y) + \sum_{n=1}^M f_{-n} (y) s^{*n} : \ 
N,M\in\N, \ \  f_k \in L_\infty(\spec(y))\right\}. 
$$
Since $y^\alpha e_n = q^{\alpha n} e_n$, the operator $y^\alpha$ is trace class for all $\alpha >0$. 
Therefore the positive functional 
\[ \label{Tr} 
\int_{\oqd}(\,\cdot\,)\,\dd \mu_\a  \ :\  \Fq \,\lra\, \C, \qquad  \int_{\oqd}\! f \,\dd \mu_\a := (1-q) \Tr_{\lN}(fy^\alpha ) . 
\]
is well defined. Explicitly, 
it is given by 
$$
 \int_{\oqd} \left( \sum_{n=0}^N s^n f_n(y) + \sum_{n=1}^M f_{-n} (y) s^{*n} \right) \hsp \dd \mu_\a 
 = (1-q) \sum_{n\in\N} f_0(q) q^{\alpha n} .
$$
Using $ \Tr_{\lN}(s^n s^{*k} f(y)y) =0$ if $k\neq n$, one easily verifies that it is faithful. 
In terms of the Jackson integral $\int_0^1 f(y)\dd_q y= (1-q) \sum_{n\in\N} f(q^n)q^n$, we can write 
\begin{align} \nonumber
 &\int_{\oqd} \left( \sum_{n=0}^N s^n f_n(y) + \sum_{n=1}^M f_{-n} (y) s^{*n}\! \right) \hsp \dd \mu_\a   \\
 &\qquad \qquad \quad = \int_0^1\! \int_{-\pi}^{\pi}\sum_{n=0}^N \E^{\im n\phi} f_n(y) + \sum_{n=1}^M f_{-n} (y) \hs \E^{-\im n\phi}
 \hs \dd \phi\, y^{\alpha -1} \dd_q y. \label{measure} 
\end{align} 

Note that the commutation relation between $y^\alpha $ and functions from $\Fq$ can be expressed by 
the automorphism $\sigma^\alpha  : \Fq \lra \Fq$ given by 
\[ \label{s}
\s^\alpha (s) = q^{- \alpha } \hs s,\quad \s^\alpha(s^*) = q^\alpha  \hs s^*, \quad 
 \s^\alpha (f(y)) = f(y), \ \  f \in L_\infty(\spec(y)), 
\]
where $\alpha\in\R$. Then, for all $h,g\in \Fq$, 
$$
g \hs y^\alpha  = y^\alpha  \hs \s^\alpha (g),   
$$
and therefore 
\begin{align}  \nonumber
\int_{\oqd} \! gh \,\dd\mu_\a 
&= (1\hsp -\hsp q) \Tr_{\lN}(ghy^\alpha)  =  (1\hsp -\hsp q) \Tr_{\lN}(\s^\alpha (h)  g y^\alpha ) \\
&= \int_{\oqd} \!\s^\alpha(h)g  \,\dd\mu_\a.  \label{mod}
\end{align} 
Note that we also have 
\[  \label{astar} 
 (\s^\alpha( f))^* = \s^{-\alpha}(f^*), \quad f \in \Fq. 
\]

We use the faithful positive functional $\int_{\oqd} (\,\cdot\,) \, \dd \mu_\a$ to define an inner product on $\Fq$ by 
$$
\ip{f}{g} := \int_{\oqd} f^* g \,\dd\mu_\a . 
$$ 
The Hilbert space closure of $\Fq$ will be denoted by  $L_2(\oqd,\mu_\a)$. 
Left multiplication with functions $x\in \Fq$ defines a faithful *-representation of $\Fq$ on 
 $L_2(\oqd,\mu_\a)$ since 
 $$
 \ip{xf}{g} = \int_{\oqd} f^* x^*g \,\dd\mu_\a =  \ip{f}{x^*g}. 
 $$
Observe that $\Fq$ leaves the subspace 
\[   \label{F0}
\F_0(\oqd):= \left\{\sum_{n=0}^N s^n f_n(y) + \sum_{n=1}^M f_{-n} (y) s^{*n} \in \Fq : \ 
  \mathrm{supp}(f_k) \text{ is finite}   \right\} 
\] 
of $L_2(\oqd,\mu_\a)$  invariant. 
Since $\F_0(\oqd)$ contains an orthonormal basis (see \cite[Proposition 1]{W}), it is 
dense in $L_2(\oqd,\mu_\a)$. We extend $\Fq$ by the unbounded element $y^{-1}$ 
and define $\cO^+(\oqd)$ as the *-algebra generated by the operators $y^{-1}$ and all $f\in \Fq$,  
considered as operators on $\F_0(\oqd)$. 
Furthermore, let $\cO^+(\oqd)^{\op}$ denote the *-algebra obtained from $\cO^+(\oqd)$ by replacing the 
multiplication with the opposite one, i.e.\ $a\cdot b:= ba$. 
 Then we obtain 
 a representation of $\cO^+(\oqd)^{\op}$ on $\F_0(\oqd) \subset L_2(\oqd,\mu_\a)$ by right multiplication, 
 \[ \nonumber 
 a^{\op}f:= fa, \quad a\in \cO^+(\oqd)^{\op}, \ \ f \in \F_0(\oqd). 
 \]
 Clearly, this representation commutes with the operators of $\cO^+(\oqd)$, as these act by left multiplication. 
 However, it is not a *-representation. 
 More precisely, \eqref{mod} and \eqref{astar} give 
 $$
 \ip{x^{\op}f}{g}  =  \ip{fx}{g} = \int_{\oqd} x^* f^* g\,\dd\mu_\a =   \int_{\oqd}  f^* g \s^{-\alpha} (x^*)\,\dd\mu_\a =
 \ip{f}{(\s^\alpha(x)^*)^{\op}g}, 
 $$
therefore  
\[ \label{opstar} 
(x^{\op})^* =  (\s^\alpha(x)^*)^{\op} .
\] 
Note that $y>0$ and $\s(y) =y$ imply that the multiplication operators $y^{\beta}$ 
and $(y^{\beta})^{\op} = (y^{\op})^{\beta} $, $\beta \in \R$,  determine 
well defined (unbounded) self-adjoint operators on $L_2(\oqd,\mu_\a)$. 

Next we use the isomorphism $\lZ\cong L_2(\S^1)$ given by $b_n :=   \frac{1}{\sqrt{2\pi}} \E^{\im n t}$ and identify 
$u$ from \eqref{zu} with the multiplication operator $uf(t):= \E^{\im t} f(t)$. 
In this way we obtain a faithful *-representation 
$\trho : \Aq \lra B(L_2(\oqd,\mu_\a) \ot L_2(\S^1))$ by multiplication operators. On generators, it is given by 
$$
\trho(a)(f\ot g) := zf\ot g, \quad \trho(c) := yf\ot ug. 
$$
The closure of the image of $\trho$ is again isomorphic to $\CSUq$. 

\section{Quantised differential calculi} 

Taking as its domain the absolutely continuous functions $AC(\S^1)$ with the weak derivative in $L_2(\S^1)$, 
the partial derivative $ \im \dt$ becomes a self-adjoint operator on $L_2(\S^1)$ satisfying the Leibniz rule 
$$
 \im \dt (\varphi\hs g) = (\im \mbox{$\frac{\partial \varphi }{\partial t}$})g + \varphi (\im \dt g) , 
 \quad \varphi\in C^{(1)}(\S^1), \ \ g\in \dom( \im \dt) . 
$$

We consider a first order differential *-calculus $\dd : \Uq \lra \Omega(\oqd)$, where 
$\Omega(\oqd)= \dd z\hs   \Uq +  \dd z^* \Uq$ 
with $\Uq$-bimodule structure given by 
$$
\dd z\hs  z^* = q^2 z^*\hs  \dd z ,\  \  \dd z^* \hs  z = q^{-2} z\hs  \dd z^* ,\  \  
\dd z \hs z =q^{-2} z \dd z , \ \ \dd z^* \hs z^* =q^{2} z^* \dd z^*, 
$$
see \cite{KS} for definitions and background on differential calculi. 
With $\s^\a$ from \eqref{s}, it follows that 
$$
\dd z \hs f = \s^{-2}(f) \hs \dd z,\quad \dd z^* \hs f = \s^{-2}(f) \hs \dd z^*,\quad f\in \Uq.  
$$
We define partial derivatives $\dz$ and $\dbz$ by 
$$
\dd(f) =  \dd z \hs  \dz(f)   +\dd z^*\hs  \dbz(f)  , \quad f\in \Uq. 
$$
Recall that $y^2= 1- z^* z$ and  $zz^* -z^* z= (1-q^2) y^2$ by \eqref{qdr}  and \eqref{y}. 
Using 
$$
1 = \dz(z) =  \mbox{$\frac{-1}{1-q^2}$} \hs  y^{-2} [z^*,z], \quad 
  1 = \dbz(z^*) =  \mbox{$\frac{1}{1-q^2}$} \hs  y^{-2} [z,z^*] , 
$$
the Leibniz rule for the commutator 
and $y^{-2} p= \s^{2}(p) y^{-2}$ for all $p\in \Uq$, one verifies by direct calculations on 
monomials $z^n z^{*m}$  that 
\[ \nonumber 
\dz(p) =  \mbox{$\frac{-1}{1-q^2}$} \hs y^{-2} [z^*,p], \quad 
  \dbz(p) =  \mbox{$\frac{1}{1-q^2}$} \hs   y^{-2} [z,p] , \quad p\in \Uq. 
\] 
We extend the partial derivatives  $\dz$ and $\dbz$ to 
$$
\dFq:= \{ f\in\Fq: y^{-2}[z^*, f] \in \Fq, \ \  y^{-2} [z, f]   \in\Fq  \}
$$
by setting 
\[ \nonumber 
\dz (f) :=   \mbox{$\frac{-1}{1-q^2}$} \hs y^{-2} [z^*,f], \quad  
 \dbz(f) := \mbox{$\frac{1}{1-q^2}$} \hs  y^{-2}  \hs  [z,f], \quad f\in \dFq. 
\] 
Note that $\Uq\subset \dFq$. By the spectral theorem for functions in $y=y^*$, one readily 
proves that $\Uq$ is dense in $L_2(\oqd,\mu_\a)$.  Thus $\dz$ and $\dbz$ are 
densely defined linear operators on $L_2(\oqd,\mu_\a)$.  

Moreover, it is easily seen that the automorphism 
$\s^\alpha$ from \eqref{s} preserves $\dFq$. For instance,  
$y^{-2}[z^*,\s^\alpha(s^n f(y))] = q^{-\alpha n} y^{-2}[z^*,s^n f(y)]\in \Fq$ for all 
$s^n f(y) \in \dFq$. 
Similarly one shows that $\dFq$ is a *-algebra. For example, 
\begin{align*}
y^{-2} [ z^*, fg] & = y^{-2} [ z^*, f]g + y^{-2} f y^{-2}y^{-2}[ z^*,g]  \\
&= y^{-2} [ z^*, f]g + \s^2(f )\hs y^{-2}[ z^*,g]\in \Fq
\end{align*}
and 
$$
y^{-2} [ z^*, f^*]  =- (y^{-2}  y^{2} [ z, f] y^{-2})^* = -q^2 (y^{-2} [z, \s^2(f)])^* \in \Fq 
$$
for $f,g\in\dFq$.

\section{Twisted derivations} 

\subsection{Twist: $\pmb{\sigma^1}$} \mbox{ }\\[4pt]
\noindent 
Our aim is to replace the first order differential operators $H$, $E$ and $F$ from \eqref{H}-\eqref{F} 
by appropriate noncommutative versions.  
First we consider $q$-analogues of the operators $\sqrt{1-\bar z z} \dz $ 
and $\sqrt{1-\bar z z} \dbz $ and  define $T_i: \dFq \lra \Fq$, \,$i=1,2$, by 
\[ \label{defT1T2}
 T_1f:= y \dz f =  \mbox{$\frac{-1}{1-q^2}$} \hs y^{-1} [z^*,f], \quad 
 T_2f:= y \dbz f =  \mbox{$\frac{1}{1-q^2}$} \hs y^{-1} [z,f]. 
\]
Observe that $T_1$ and $T_2$ satisfy a twisted Leibniz rule: For all $f,g\in \dFq$, 
\begin{align}\nonumber
T_1(fg) &=  \mbox{$\frac{-1}{1-q^2}$} \hs y^{-1}[z^*,fg] 
=   \mbox{$\frac{-1}{1-q^2}$} \hs y^{-1}([z^*, f] g  + f y y^{-1} [z^*, g]) \\
&= (T_1 f)g + \s^{1}\hsp (f)\hs  T_1g \label{T1}
\end{align}
and similarly 
\[  \label{T2} 
T_2(fg) = (T_2 f)g + \s^{1}\hsp (f)\hs  T_2g. 
\] 
Setting $\hat T_1 := T_1 \ot 1$, $\hat T_2 := T_2 \ot 1$  and $\hat \s^1 := \s^1 \ot 1$, we get  

\[ \label{T1T2}
\hat T_1(\phi \psi ) = (\hat T_1 \phi)\psi  + \hat\s^{1}\hsp (\phi)\hs  \hat T_1\psi, \quad 
\hat T_2(\phi \psi ) = (\hat T_2 \phi)\psi  + \hat\s^{1}\hsp (\phi)\hs  \hat T_2\psi
\] 
for all 
$\phi, \psi \in \dFq \ot C^{(1)}(\S^1)$ by \eqref{T1} and \eqref{T2}. 

Next consider the operator 
$\hat T_0:= y^{-1} \dt$ on the domain 
$\dom(y^{-1}) \ot C^{(1)}(\S^1)$  in  $L_2(\oqd,\mu_\a) \ot L_2(\S^1)$. 
Note that, for all $f,g\in \Fq$ with $g\in\dom(y^{-1})$, one has 
$y^{-1}fg= \s^1(f)y^{-1} g\in L_2(\oqd,\mu_\a)$, hence $fg\in\dom(y^{-1})$. 
Now, for all $\varphi ,\, \xi \in C^{(1)}(\S^1)$, 
\begin{align}  \nonumber 
\hat T_0(fg\ot \varphi\xi) 
&= y^{-1} fg\ot \dt ( \varphi \xi ) \nonumber \\
&= y^{-1} fg\ot (\dt \varphi)\xi +  y^{-1} fy\hs  y^{-1}  g\ot  \varphi\,(\dt \xi) \nonumber \\
&= \big(\hat T_0(f\ot \varphi)\big) (g\ot\xi) + (\s^1(f) \ot \varphi) \big(\hat T_0 (g\ot\xi)\big). 
 \nonumber 
\end{align}
Therefore, for all $\phi, \psi \in \Fq \ot C^{(1)}(\S^1)$  with $\psi \in \dom (y^{-1} \ot 1)$, we have 
\[ \label{T0}
\hat T_0 (\phi \psi) = (\hat T_0\phi)  \psi + \hat\s^1(\phi) \hs ( \hat T_0\psi). 
\]
As a consequence, $\hat T_0,$ $\hat T_1$
and $\hat T_2$ satisfy the same twisted Leibniz rule. 

In the definition of the Dirac operator, we will multiply $\hat T_0$, $\hat T_1$ and $\hat T_2$  
with multiplication operators from the
opposite algebra. The following lemma shows that 
these multiplication operators do not change the twisted Leibniz rule.  
Our aim is to prove that the Dirac operator has bounded twisted commutators 
with functions of an appropriate *-algebra, where the twisted commutator 
of densely defined operators $T$ on  $L_2(\oqd,\mu_\a)\bar\ot L_2(\S^1)$ with  
$\phi\in \Fq \ot C(\S^1)$ is defined by 
$$
[T, \phi]_{\s^1} := T \phi - \s^1(\phi) T. 
$$

The purpose of the following lemma is to 
clarify the setup for the algebraic
manipulations to be carried out, and to
ensure that these make sense in their
Hilbert space realisation. 

\begin{lem} \label{lem1}  
Let $\A$ be a unital *-algebra, $\int : \A \ra \C$ 
a faithful positive functional, $\hH$ the Hilbert space closure of $\A$ 
with respect to the inner product
$\ip{a}{b}:=\int a^*b$, and assume that  
left multiplication by an element in $\A$
defines a bounded operator on $\hH$. 
Let $T$ be a densely defined linear operator on $\hH$, 
$\A^1\subset \A$ a *-sub\-algebra and $\cD\subset \hH$
a dense subspace such that $\cD + \A^1\subset\dom(T)$,  
$T(\A^1)\subset\A$, and $T$ satisfies the twisted 
Leibniz rule 
$$
T(f \psi) = (T f)\psi + \s(f)\hs  T\psi,\quad f\in \A^1, \ \ \psi\in \cD + \A^1
$$
for an automorphism $\s:\A\ra\A$. Assume
finally that $\hat x$ is a densely defined 
linear operator on $\hH$ with 
$\cD\subset \dom(\hat x)$, and that 
$f\psi \in \cD$  and  $\hat x f \psi =
f\hat x \psi$ hold for all $f\in\A$ and 
$\psi\in \cD$. 
Then 
\[ \label{Tfg}
\hat x T(fg) = \hat x (Tf) g + \s(f) \hat x T(g), \quad f,g\in \A^1
\]
as operators on $\cD$ and 
$$
[\hat x T, f]_{\s} \psi = \hat x (Tf)\psi, \quad f\in \A^1, \ \ \psi \in \cD.
$$
\end{lem}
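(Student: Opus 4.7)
The plan is to deduce both identities from the twisted Leibniz rule for $T$ and the commutation $\hat x h \xi = h \hat x \xi$ ($h \in \A$, $\xi \in \cD$). The hypothesis $T(\A^1) \subset \A$ is decisive: it makes $Tf$, $T(g)$ and $T(fg)$ into genuine elements of $\A$, hence bounded multiplication operators on $\hH$, and applying the twisted Leibniz rule to $f\in\A^1$ and $\psi := g \in \A^1 \subset \cD + \A^1$ yields first the purely algebraic identity
\[
T(fg) \,=\, (Tf)g + \s(f)\hs T(g) \quad \text{in } \A.
\]

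For \eqref{Tfg}, I would evaluate this identity on an arbitrary $\psi \in \cD$ and apply $\hat x$, obtaining
\[
\hat x \hs T(fg)\psi \,=\, \hat x \bigl((Tf)(g\psi)\bigr) + \hat x \bigl(\s(f)\hs T(g)\psi\bigr).
\]
Both $g\psi$ and $T(g)\psi$ lie in $\cD$ by the invariance $\A \cdot \cD \subset \cD$ (since $Tf$ and $T(g)$ are in $\A$), so the commutation hypothesis applies to the vector $T(g)\psi \in \cD$ and turns the second summand into $\s(f)\hs\hat x \hs T(g)\psi$. This is exactly the claimed operator identity on $\cD$.

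The commutator identity is obtained along the same lines. I would unfold $[\hat x T, f]_\s \psi = \hat x\hs T(f\psi) - \s(f)\hs \hat x \hs T\psi$ for $\psi \in \cD$, use the Leibniz rule (applicable since $f \in \A^1$ and $\psi \in \cD \subset \cD + \A^1$) to expand $T(f\psi) = (Tf)\psi + \s(f)\hs T\psi$, apply $\hat x$ term by term, and cancel the two $\s(f)\hs\hat x \hs T\psi$ contributions to leave $\hat x(Tf)\psi$.

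The only delicate point in the whole argument is this last cancellation: it requires commuting $\hat x$ with $\s(f)$ on the vector $T\psi$, whereas the commutation hypothesis is stipulated only for vectors in $\cD$ and $T\psi$ lies a priori merely in $\hH$. In the intended applications to the operators $\hat T_0$, $\hat T_1$, $\hat T_2$ the working domain is $T$-invariant, so the step is automatic; in the abstract statement one has to read the cancellation either as resting on a tacit requirement $T\psi \in \dom(\hat x)$ with the corresponding $\A$-linearity, or as a formal identity of operators defined only where both sides make sense. Apart from this bookkeeping, the proof is pure unpacking of the twisted Leibniz rule and of the $\A$-linearity of $\hat x$.
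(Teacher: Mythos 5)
Your proof is correct and follows essentially the same route as the paper's: unpack the twisted Leibniz rule and move $\hat x$ past multiplication by elements of $\A$ on vectors in $\cD$, the only cosmetic difference being that you first derive $T(fg)=(Tf)g+\s(f)\hs Tg$ inside $\A$ (taking $\psi=g\in\A^1$) and then evaluate on $\psi\in\cD$, while the paper expands $(T(fg))\psi=T(fg\psi)-\s(fg)T\psi$ directly and cancels the $T\psi$ terms. The delicate point you flag for the commutator identity --- commuting $\hat x$ with $\s(f)$ on $T\psi$, which need not lie in $\cD$ --- occurs in the paper's own computation as well and is dispatched only by its opening remark that well-definedness of each term on $\cD$ is the sole nontrivial issue, so your explicit bookkeeping of this point is a fair (indeed slightly more careful) rendering of the same argument.
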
 

\begin{proof}
The only slightly nontrivial statement
is that each term in the following
algebraic computations is well-defined
as an operator on the domain $\cD$:
Let $\psi \in\cD$ and $f,g\in \A^1$. 
From the twisted Leibniz rule, we get 
\begin{align*}
( T(fg) )\psi &= T(fg\psi) - \s(fg) T\psi 
= (T f)(g\psi) + \s(f) T(g\psi) - \s(f) \s(g) T\psi \\
&= (T f)(g\psi) + \s(f) (T g)\psi  . 
\end{align*}  
Since  $\s(f )\in  \A$ 
for all $f \in \A$, it follows  that 
$$
\hat x( T(fg) )\psi =  \big(\hat x (T f)g  + \s(f)\hs \hat x (T g)\big)\psi , 
$$
which proves \eqref{Tfg}.  As 
$
\hat x\hs  T (f \psi) = \hat x\hs (Tf) \psi  +  \hat x \hs  \s(f)\hs T \psi 
= \hat x\hs (Tf) \psi  +    \s(f)\hat x\hs T \psi , 
$
we also have  $[\hat x\hs T, f]_{\s} \psi 
= \hat x\hs (Tf) \psi +    \s(f)\hs \hat x \hs T \psi -   \s(f)\hs  \hat x\hs T\psi
= \hat x\hs ( T f) \psi $. 
\end{proof}

By \eqref{T1T2} and \eqref{T0}, the lemma applies in particular to the operators $\hat T_0$, $\hat T_1$ 
and $\hat T_2$ with $\A:= \Fq \ot C(\S^1)$,
$\A^1:=\dFq \ot C^{(1)}(\S^1)$, 
$\hH:=L_2(\oqd,\mu _\alpha) \bar\otimes 
L_2(\S^1)$ (where integration on 
$\S^1$ is taken with respect to the
Lebesgue measure), the  
automorphism $\hat \s^1$
and the operators $\hat x $ coming from 
$\cO^+(\oqd)^\op$.  
As the dense domain, we may take 
\[\label{defbereich3}
\cD:= \F_0(\oqd) \ot C^{(1)}(\S^1).
\]

\subsection{Twist: $\pmb{\sigma^2}$} 
\label{sec2}\mbox{ }\\[4pt]
\noindent 
First we show that $\dz$ and $\dbz$ satisfy a twisted Leibniz rule for the automorphism $\s^2$. Let $f,g\in \dFq$. Then 
\begin{align}\label{S1}
\dz(fg) &=  
	\mbox{$\frac{-1}{1-q^2}$} 
	\hs y^{-2}[z^*\hsp,fg] 
	=   
	\mbox{$\frac{-1}{1-q^2}$} 
	\hs y^{-2}([z^*\hsp, f] g  
	+ f y^2 y^{-2} [z^*\hsp, g]) 
	\nonumber\\
&= (\dz f)g + \s^{2}\hsp (f)\hs  \dz g
\end{align} 
and similarly 
\[  \label{S2} 
\dbz(fg) = (\dbz f)g + \s^{2}\hsp (f)\hs  \dbz g. 
\] 

Setting $\hat S_1 := \dz \ot 1$, $\hat S_2 := \dbz \ot 1$  and $\hat \s^2 := \s^2 \ot 1$, we get for all 
$\phi, \psi \in \dFq \ot C^{(1)}(\S^1)$ 
\[ \nonumber 
\hat S_1(\phi \psi ) = (\hat S_1 \phi)\psi  + \hat\s^{2}\hsp (\phi)\hs  \hat S_1\psi, \quad 
\hat S_2(\phi \psi ) = (\hat S_2 \phi)\psi  + \hat\s^{2}\hsp (\phi)\hs  \hat S_2\psi
\] 
by  \eqref{S1} and \eqref{S2}. 

Next consider the operator 
$\hat S_0:= y^{-2} \dt$ on the domain 
$\dom(y^{-2}) \ot C^{(1)}(\S^1)$  in  $L_2(\oqd,\mu_\a) \ot L_2(\S^1)$. 
Again $fg\in\dom(y^{-2})$ for all $f\in \Fq$ and $g\in\dom(y^{-2})$ since 
$y^{-2}fg= \s^2(f)y^{-2} g\in L_2(\oqd,\mu_\a)$. 
Now, for all $\varphi ,\, \xi \in C^{(1)}(\S^1)$, 
\begin{align}  \nonumber 
\hat S_0(fg\ot \varphi\xi) 
&= y^{-2} fg\ot \dt ( \varphi \xi ) \nonumber \\ \nonumber 
&= y^{-2} fg\ot (\dt \varphi)\xi +  y^{-2} fy^2\hs  y^{-2}  g\ot  \varphi\,(\dt \xi) \\
&= \big(\hat S_0(f\ot \varphi)\big) (g\ot\xi) + (\s^2(f) \ot \varphi) \big(\hat S_0 (g\ot\xi)\big). 
\nonumber 
\end{align}
Therefore, for all $\phi, \psi \in \Fq \ot C^{(1)}(\S^1)$  with $\psi \in \dom (y^{-1} \ot 1)$, we have 
\[ \nonumber 
\hat S_0 (\phi \psi) = (\hat S_0\phi)  \psi + \hat\s^2(\phi) \hs ( \hat S_0\psi). 
\]
As a consequence, $\hat S_0,$ $\hat S_1$ and $\hat S_2$ satisfy the same twisted Leibniz rule 
for the twist $\hat\s^2$, and so do $x_i^\op S_i$ for $x_i^\op\in\{y^{\op}, (y^{\op})^2, z^\op, z^{*\op} \}$ 
by Lemma \ref{lem1}.

\section{Adjoints} 

\subsection{$\pmb{\alpha = 2}$}\mbox{ }\\[4pt]
\noindent 
Set $\alpha = 2$ in \eqref{Tr}, 
$$
\cD_0:=\{ f\in  \dFq\cap \dom( y^{-1})\cap \dom((y^{-1})^\op) : T_1(f), T_2(f) \in \dom((y^{-1})^\op)\} 
$$
and 
\[\label{defbereich} 
\cD:= \cD_0 \ot C^{(1)}(\S^1)  \;\subset \; L_2(\oqd,\mu_\a) \;\bar\ot \;L_2(\S^1). 
\]
It follows from $\F_0(\oqd)\subset \cD_0$ 
that $\cD$ is dense in $L_2(\oqd,\mu_\a) \;\bar\ot \;L_2(\S^1)$.  
Let $T_1$ and $T_2$ be the operators from \eqref{defT1T2} with domain $\cD_0$. 
Using \eqref{zy} and the trace property, 
we compute for all $f,g\in \cD_0$, 
\begin{align*} 
& \ip{ T_1 f}{g}  = -\mbox{$\frac{1-q}{1-q^2}$}\,  \Tr_{\lN}( f^*z y^{-1} g y^2 - z f^* y^{-1} g y^2   )\\
& =- \mbox{$\frac{1-q}{1-q^2}$}\,\Tr_{\lN}( q^{-1}  f^*y^{-1} z g y^2 -  q^{-2}  f^* y^{-1} g z y^2) \\
& =\mbox{$\frac{(1-q)(q^{-2}-q^{-1}) }{1-q^2}$}\,\Tr_{\lN}( f^*y^{-1}  g z y^2)
 - \mbox{$\frac{(1-q)q^{-1}}{1-q^2}$}\,
\Tr_{\lN}(   f^*y^{-1} z g y^2 -  f^* y^{-1} g z y^2) \\
& =  \ip{ f}{ (\mbox{$\frac{q^{-2}}{1+q}$} z^{\op} y^{-1}   - q^{-1} T_2 )g}, 
\end{align*} 
therefore 
\[ \label{T1*} 
\mbox{$\frac{q^{-2}}{1+ q}$} z^{\op} y^{-1}   - q^{-1} T_2  \ \subset \ T_1^*. 
\]
From \eqref{opstar}, it also follows that 
$$
\mbox{$\frac{q}{1+ q}$} z^{*\op} y^{-1}   - qT_1 \subset   T_2^*. 
$$
Similarly, using $zz^* -z^* z= (1-q^2) y^2$, 
\begin{align*} 
& \ip{ (zy^{-1})^{\op}T_1 f}{g}  = -\mbox{$\frac{1-q}{1-q^2}$}\,  
\Tr_{\lN}(y^{-1}z^* f^*z y^{-1} g y^2 - y^{-1} z^* z f^* y^{-1} g y^2   )\\
&  = -\mbox{$\frac{1-q}{1-q^2}$}\,  
\Tr_{\lN}(y^{-1} (z z^* -z^*z) f^* y^{-1} g y^2  + y^{-1}z^* f^*z y^{-1} g y^2 - y^{-1} z z^*  f^* y^{-1} g y^2   )\\
& = -(1-q) \,  \Tr_{\lN} \big( f^* y^{-1} gy y^2 \big) 
-\mbox{$\frac{1-q}{1-q^2}$}\,  \Tr_{\lN} \big(f^*y^{-1}(zg-gz) z^*y^{-1} y^2\big) \\
&=  \ip{f}{\big(-\s^1  -(z^*y^{-1})^\op T_2\big)g}, 
\end{align*} 
where $\s^1(g) = y^{-1}g  y= y^{-1} y^{\op} g$ for all $g\in\Fq\cap\dom(y^{-1})$. Hence 
\[  \label{T21}
-\s^1  -(z^*y^{-1})^\op T_2 \ \subset \big((zy^{-1})^{\op}\hs T_1\big)^* . 
\]
Since $y^{-1}$ and $ y^{\op} $ are
self-adjoint and thus $\s^1$ is
symmetric, we also get from the above calculations
\[  \label{T12}
-\s^1  -(zy^{-1})^\op T_1 \ \subset \big((z^*y^{-1})^{\op}\hs T_2\big)^* . 
\]  

Recall that $\im \dt$ is a symmetric operator on $C^{(1)}(\S^1)\subset L_2(\S^1)$. Also, for all 
$\varphi \in C^{(1)}(\S^1)$, we have 
\[  \label{edt}
 (\E^{\im t} \im \dt)^* \varphi =  \im \dt (\E^{-\im t} \hs \varphi ) 
 =  \E^{-\im t} \hs \varphi  +  \E^{-\im t}\hs  \im \dt \varphi. 
\]
From this, \eqref{opstar} and the self-adjointness of $y^{-1}$, it follows that 
$$
   q^{-2} z^{\op}  y^{-1}   \E^{-\im t} +  q^{-2} z^{\op}  y^{-1}   \E^{-\im t}  \im \dt \ \subset \  
    (  z^{*\op} y^{-1}  \E^{\im t} \im \dt )^*, 
$$ 
where the left-hand side and  $z^{*\op} y^{-1}  \E^{\im t} \im \dt $ are operators  on $\cD$. Analogously, 
\[ \label{zdt*}
  - q^{2} z^{*\op}  y^{-1}   \E^{\im t} +   q^{2} z^{*\op}  y^{-1}   \E^{\im t} \im  \dt \ \subset \  
   (  z^{\op} y^{-1}  \E^{-\im t} \im  \dt )^* .
\] 
Now we are in a position to state the main result of this section. 

\begin{prop}  \label{P1}
Consider the following operators on
$L_2(\oqd,\mu_\a) \;\bar\ot \;L_2(\S^1)$
with domain $\cD$ defined in
(\ref{defbereich}) above:  
\begin{align*}  
\hat H &:= (z y^{-1})^{\op}\hs y\hs  \dz  -  (z y^{-1})^{*\op}\hs y\hs  \dbz  +  \hs y^{\op}\hs   y^{-1} \hs \im \dt ,   \\ 
\hat E &:= -  \E^{-\im t}\hs   y \hs \dbz  -   
\mbox{$\frac{q^{-1}}{1+q}$}  z^{\op} \hs y^{-1} \hs  \E^{-\im t}\hs \im  \dt , \\
\hat F &:=  q\hs  \E^{\im t}\hs   y \hs \dz -   \mbox{$\frac{q}{1+q}$}  z^{*\op} \hs y^{-1} \hs  \E^{\im t}\hs \im  \dt . 
\end{align*}
Then \,$\hat H \subset \hat H^*$, \,$\hat F\subset \hat E^*$ and \,$\hat E\subset \hat F^*$. 
\end{prop}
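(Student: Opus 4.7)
The plan is to decompose each of the three operators into manageable summands, to take the formal adjoint of each summand using the adjoint identities already assembled in this section, and to verify that all spurious terms cancel in the sum. All algebraic manipulations take place on the dense domain $\cD$ defined in \eqref{defbereich}, whose construction is engineered precisely so that the intermediate operators produced (in particular the pairings of the form $(y^{-1})^{\op} T_i f$ arising from \eqref{T21} and \eqref{T12}) still land in $L_2(\oqd,\mu_\a) \bar\ot L_2(\S^1)$.

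For the symmetry $\hat H \subset \hat H^*$, I split $\hat H = \Pi_{12} + A_3$, where $\Pi_{12} := (zy^{-1})^{\op} T_1 - (z^* y^{-1})^{\op} T_2$ (the second summand of $\hat H$ being rewritten in the convention of \eqref{T12}) and $A_3 := y^{\op}\, y^{-1}\, \im\dt$. The relation \eqref{T21} gives $\bigl((zy^{-1})^{\op} T_1\bigr)^* \supset -\s^1 - (z^* y^{-1})^{\op} T_2$ and \eqref{T12} gives $\bigl((z^* y^{-1})^{\op} T_2\bigr)^* \supset -\s^1 - (zy^{-1})^{\op} T_1$; subtracting the second from the first, the two identical $-\s^1$ terms cancel and one obtains $\Pi_{12} \subset \Pi_{12}^*$. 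The operator $A_3$ is the product of the pairwise commuting self-adjoint multiplications $y^{-1}$ and $y^{\op}$ with the symmetric operator $\im\dt$ on $C^{(1)}(\S^1)$, so $A_3 \subset A_3^*$. Summing these two inclusions yields $\hat H \subset \hat H^*$.

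For $\hat F \subset \hat E^*$, I decompose $\hat E = -\E^{-\im t} T_2 - \frac{q^{-1}}{1+q}\, z^{\op} y^{-1} \E^{-\im t}\, \im\dt$. The formal adjoint of the first summand is computed by combining the companion to \eqref{T1*}, namely $\frac{q}{1+q}\, z^{*\op} y^{-1} - q T_1 \subset T_2^*$, with the unitarity $(\E^{-\im t})^* = \E^{\im t}$ and the commutation of $\E^{\pm\im t}$ with all first-factor operators; this yields contributions proportional to $z^{*\op} y^{-1} \E^{\im t}$ and to $\E^{\im t} T_1$. The adjoint of the second summand is exactly \eqref{zdt*}, which supplies an additional $z^{*\op} y^{-1} \E^{\im t}$ contribution (arising from the identity $\im\dt \cdot \E^{\im t} = \E^{\im t}(\im\dt - 1)$) together with a $z^{*\op} y^{-1} \E^{\im t}\, \im\dt$ contribution. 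Upon summing, the two zero-order pieces proportional to $z^{*\op} y^{-1} \E^{\im t}$ cancel identically because the prefactors $\frac{q^{\pm 1}}{1+q}$ in $\hat E$ and in the adjoint relations are calibrated precisely with this cancellation in view, and what survives is $q\, \E^{\im t} T_1 - \frac{q}{1+q}\, z^{*\op} y^{-1} \E^{\im t}\, \im\dt = \hat F$. The inclusion $\hat E \subset \hat F^*$ follows from the entirely parallel computation with $z \leftrightarrow z^*$, $T_1 \leftrightarrow T_2$ and $\E^{\im t} \leftrightarrow \E^{-\im t}$ interchanged, invoking \eqref{T1*} itself together with the adjoint identity displayed immediately before \eqref{zdt*}.

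The main obstacle is the careful bookkeeping of the $q$-prefactors: every adjoint identity used (the relations \eqref{T21}/\eqref{T12}, the formulae for $T_1^*$ and $T_2^*$, and \eqref{zdt*} together with its companion) produces unwanted zero-order remainders, and the precise signs and $q$-coefficients in the definitions of $\hat H$, $\hat E$, $\hat F$ are the unique choice that makes these remainders cancel in pairs. Beyond this $q$-accounting the argument is purely formal, and all domain issues are absorbed into the definition of $\cD$ in \eqref{defbereich}.
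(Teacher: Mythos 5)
Your proposal is correct and follows essentially the same route as the paper: split $\hat H$, $\hat E$, $\hat F$ into their summands, apply the adjoint inclusions \eqref{T21}, \eqref{T12}, \eqref{T1*} and its companion for $T_2^*$, and \eqref{zdt*} together with the relation preceding it, and observe that the zero-order remainders cancel on $\cD$. The only (immaterial) difference is that you verify both off-diagonal inclusions by the two symmetric direct computations, whereas the paper computes only $\hat F^* \supset \hat E$ and then deduces $\hat F \subset \hat F^{**} \subset \hat E^*$ from the dense definedness of $\hat F^*$.
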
 
\begin{proof}
Since $y^{\op}$, \,$y^{-1}$ and $\im \dt$ are commuting symmetric operators on $\cD$, we have 
$y^{\op}\hs   y^{-1} \hs \im \dt \subset (y^{\op}\hs   y^{-1} \hs \im \dt)^*$. 
Now it follows from \eqref{T21} and \eqref{T12} that 
$$
\hat H^*\ \supset\  -\s^1  -(z^*y^{-1})^\op \hs y\hs  \dbz + \s^1 
 + (zy^{-1})^\op \hs y\hs  \dz + \hs y^{\op}\hs   y^{-1} \hs \im \dt \ = \ \hat H. 
$$ 
Furthermore, from \eqref{T1*}  and \eqref{zdt*}, we obtain 
$$
\hat F^* \ \supset\  \E^{-\im t} \mbox{$\frac{q^{-1}}{1+ q}$} z^{\op} y^{-1}   -  \E^{-\im t} \hs   y \hs \dbz
-   \E^{-\im t} \mbox{$\frac{q^{-1}}{1+q}$} \hs  z^{\op}  y^{-1}  
 -  \mbox{$\frac{q^{-1}}{1+q}$} \hs  z^{\op}  y^{-1}   \E^{-\im t}  \im \dt 
\  = \ \hat E . 
$$
The last relation also shows that $\hat
F^*$ is densely defined, thus 
$\hat F \subset \hat F^{**} \subset \hat E^*$. 
\end{proof}

\subsection{$\pmb{\alpha = 1}$} \mbox{ }\\[4pt]
\noindent 
Consider now 
\[\label{defbereich2} 
	\cD:= 
	\cD_0 \ot C^{(1)}(\S^1)  
	\;\subset \; L_2(\oqd,\mu_\a) 
	\;\bar\ot \;L_2(\S^1) 
\]
with $
	\cD_0:=
	\dFq\cap 
	\dom( y^{-2})$. 
For all $f,g \in \dFq$, 
\begin{align*} 
 \ip{ y^\op \dz f}{g}  &= -\mbox{$\frac{1-q}{1-q^2}$}\,  \Tr_{\lN}( y f^*z y^{-2} g y - y z f^* y^{-2} g y   )\\
& =-  q^{-2}\,\mbox{$\frac{1-q}{1-q^2}$}\,\Tr_{\lN}( f^*y^{-2} z g y^2 -  f^* y^{-2} g z y^2) \\
& =  - q^{-2}\,\mbox{$\frac{1-q}{1-q^2}$}\,\Tr_{\lN}( f^* y^{-2}[ z, g] y^2 )  \\
& =  \ip{ f}{ -q^{-2} y^\op \dbz  g},
\end{align*} 
thus  
\[  \label{yopdz}
-q^{-1} y^\op \dbz  \subset (q y^\op \dz)^*\quad \text{and}\quad 
 q y^\op \dz \subset (q y^\op \dz)^{**} \subset (-q^{-1} y^\op \dbz)^*.
\]
Next, 
\begin{align} \nonumber
 &\ip{ z^\op \dz  f}{g}  =  -\mbox{$\frac{1-q}{1-q^2}$}\,  \Tr_{\lN}( z^* f^*z y^{-2} g y - z^* z f^* y^{-2} g y   )\\ 
 &\quad =  -\mbox{$\frac{1-q}{1-q^2}$}\,  \Tr_{\lN}( q^{-1} f^*y^{-2} z g z^* y -  f^* y^{-2} g z^* z y   )\nonumber\\
&\quad =  - \mbox{$\frac{1-q}{1-q^2}$}\,  \Tr_{\lN}\big( q^{-1}( f^*  y^{-2} z g  z^* y -  f^* y^{-2}  g  z z^* y) \nonumber\\
&\quad \mbox{ } \hspace{8pt}   + q^{-1}  f^* y^{-2}  g  z z^* y  -  f^* y^{-2}  g  z^* z y  \big ) \nonumber\\
 &\quad = - \ip{  f}{(q^{-1}z^{*\op}  \dbz )g}  -q^{-1} \mbox{$\frac{(1-q)^2}{1-q^2}$}\, 
  \Tr_{\lN}\big( f^* y^{-2}  g  y +q f^* y^{-2}  g  y^2 y\big)\label{zopdz} 
\end{align} 
and 
\begin{align}  \nonumber
 \ip{ (z^{*\op}  \dbz ) f}{g}  &=  \mbox{$\frac{1-q}{1-q^2}$}\,  \Tr_{\lN}( z f^*z^* y^{-2} g y - z z^* f^* y^{-2} g y   )\\ 
  &=  \mbox{$\frac{1-q}{1-q^2}$}\,  \Tr_{\lN}( q f^*  y^{-2} z^* g  z y -  f^* y^{-2}  g  z z^* y   )\nonumber\\ 
   &=   \mbox{$\frac{1-q}{1-q^2}$}\,  \Tr_{\lN}\big( q( f^*  y^{-2} z^* g  z y -  f^* y^{-2}  g  z^* z y)
   \nonumber\\
   &\hspace{12pt}   + q  f^* y^{-2}  g  z^* z y -  f^* y^{-2}  g  z z^* y  \big )  \nonumber \\
   &= - \ip{  f}{(qz^{\op}  \dz )g} - \mbox{$\frac{(1-q)^2}{1-q^2}$}\, 
    \Tr_{\lN}\big( f^* y^{-2}  g  y +q f^* y^{-2}  g  y^2 y\big). \label{starzopdz}
\end{align} 
From \eqref{zopdz} and \eqref{starzopdz}, 
\[ \label{qzdz}
\ip{ (q z^\op \dz - z^{*\op}  \dbz )   f}{g}  = \ip{ f}{(q z^\op \dz - z^{*\op}  \dbz ) g}  
\] 
i.e., the operator $q z^\op \dz - z^{*\op}  \dbz $ is symmetric. As $(y^{2})^{\op}\hs  y^{-2} \im \dt$ is the product of 
commuting symmetric operators, 
\[ \label{yydt}
(y^{2})^{\op}\hs  y^{-2} \im \dt \ \subset\  \big((y^{2})^{\op}\hs  y^{-2} \im \dt\big)^*
\] 
is also symmetric. 

By \eqref{opstar} and \eqref{edt}, 
$$
 z^{\op} y^\op y^{-2} \E^{-\im t}   + z^{\op} y^\op y^{-2} \E^{-\im t}  \im \dt\  \subset\  
 (  z^{*\op} y^\op y^{-2} \E^{\im t} \im \dt)^*.
$$
Similarly, 
$$
 - z^{*\op} y^\op y^{-2} \E^{\im t} 
   + z^{*\op} y^\op y^{-2} \E^{\im t}  \im \dt \ \subset \ ( z^\op y^\op y^{-2} \E^{-\im t} \im \dt)^* . 
$$
Since $z^{\op} y^\op  \subset (z^{*\op} y^\op)^*$ by \eqref{opstar}, 
\[ \label{ef}
 \half z^{\op} y^\op y^{-2} \E^{-\im t}   + z^{\op} y^\op y^{-2} \E^{-\im t}  \im \dt\  \subset\  
 (- \half z^{*\op} y^\op y^{-2} \E^{\im t}    +  z^{*\op} y^\op y^{-2} \E^{\im t} \im \dt)^*.
\]
Analogously, using $z^{*\op} y^\op  \subset (z^{\op} y^\op)^*$, 
\[ \label{ee}
 -\half  z^{*\op} y^\op y^{-2} \E^{\im t}   + z^{*\op} y^\op y^{-2} \E^{\im t}  \im \dt \ \subset \ 
 ( \half z^{\op} y^\op y^{-2} \E^{-\im t}   + z^\op y^\op y^{-2} \E^{-\im t} \im \dt)^* . 
\]
As in the previous section, we summarise our results in a proposition. 
\begin{prop}  \label{P2} 
Let $\gamma_q$ be a non-zero real number. 
Consider the following operators on
$L_2(\disc_q) \;\bar\ot \;L_2(\S^1)$
with domain $\cD$ defined in
(\ref{defbereich2}) above:  
\begin{align*}  
\hat H_1 &:= q z^{\op}   \dz  -  z^{*\op} \dbz  + (y^{2})^{\op}\hs y^{-2} \im \dt,   \\ 
\hat E_1 &:= - q^{-1}  \E^{-\im t}\hs   y^\op \hs \dbz  - \gamma_q  z^{\op} y^\op y^{-2} \E^{-\im t}  \im \dt
 - \mbox{$\frac{\gamma_q}{2}$} z^{\op} y^\op y^{-2} \E^{-\im t}   ,   \\
\hat F_1 &:=  q\hs  \E^{\im t}\hs   y^\op \hs \dz  - \gamma_q  z^{*\op} y^\op y^{-2} \E^{\im t}  \im \dt
+ \mbox{$\frac{\gamma_q}{2}$} z^{*\op} y^\op y^{-2} \E^{\im t}   .
\end{align*}
Then \,$\hat H_1 \subset \hat H_1^*$, \,$\hat F_1\subset \hat E_1^*$ and \,$\hat E_1\subset \hat F_1^*$. 
\end{prop}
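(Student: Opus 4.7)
The plan is to follow the template of the proof of Proposition \ref{P1}: decompose each of $\hat H_1$, $\hat E_1$, $\hat F_1$ into a sum of pieces whose adjoint-inclusions are already recorded in the displays \eqref{yopdz}, \eqref{qzdz}, \eqref{yydt}, \eqref{ef}, \eqref{ee} that immediately precede the statement, and then assemble them via the elementary rule $A^* + B^* \subset (A+B)^*$ for densely defined operators sharing the common domain $\cD$.

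For the symmetry $\hat H_1 \subset \hat H_1^*$, I would write $\hat H_1$ as the sum of $q z^\op \dz - z^{*\op}\dbz$ and $(y^2)^\op y^{-2}\im\dt$. The first summand is symmetric on $\cD$ by \eqref{qzdz}, and the second is a product of three pairwise commuting symmetric operators, hence symmetric by \eqref{yydt}; the sum is therefore symmetric.

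For $\hat E_1 \subset \hat F_1^*$, I split
$$
\hat E_1 \,=\, \bigl(-q^{-1}\E^{-\im t} y^\op \dbz\bigr) + \bigl(-\tfrac{\gamma_q}{2}\, z^\op y^\op y^{-2}\E^{-\im t} - \gamma_q\, z^\op y^\op y^{-2}\E^{-\im t}\im\dt\bigr).
$$
The first summand lies in the adjoint of $q\E^{\im t}y^\op\dz$ by \eqref{yopdz}, using that $\E^{\pm\im t}$ acts only on the $L_2(\S^1)$ factor and commutes with every operator on $L_2(\oqd,\mu_\a)$. The second summand is precisely $-\gamma_q$ times the left-hand side of \eqref{ef}; scaling \eqref{ef} by the real number $-\gamma_q$ places this piece inside the adjoint of $\hat F_1 - q\E^{\im t}y^\op\dz$. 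Summing the two adjoint inclusions gives $\hat E_1 \subset \hat F_1^*$. The opposite inclusion $\hat F_1 \subset \hat E_1^*$ is then automatic, just as in the final line of the proof of Proposition \ref{P1}: since $\hat F_1^*$ is densely defined (its domain contains $\cD$), one has $\hat F_1 \subset \hat F_1^{**} \subset \hat E_1^*$. A direct derivation using \eqref{yopdz} and \eqref{ee} is also available and would serve as a sanity check.

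The main obstacle is purely bookkeeping: the coefficients of the individual summands, together with the free parameter $\gamma_q$, must line up so that the cross term $\pm q\E^{\pm\im t}y^\op\dz$ emerging from \eqref{yopdz} cancels exactly against the companion term hidden inside the adjoint side of \eqref{ef}. This cancellation is guaranteed by the way $\hat E_1$ and $\hat F_1$ were assembled in Section \ref{sec2}. Beyond that, one only needs to verify that every multiplication operator appearing in the decomposition preserves $\cD = \cD_0 \ot C^{(1)}(\S^1)$, which follows from the stability of $\cD_0 = \dFq \cap \dom(y^{-2})$ under left/right multiplication by $z$, $z^*$, $y^\op$, $(y^\op)^{\pm 1}$ and $y^{-2}$, and of $C^{(1)}(\S^1)$ under multiplication by $\E^{\pm\im t}$.
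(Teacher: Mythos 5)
Your proposal is correct and follows essentially the same route as the paper: the symmetry of $\hat H_1$ comes from \eqref{qzdz} and \eqref{yydt}, and the inclusions $\hat E_1\subset\hat F_1^*$, $\hat F_1\subset\hat E_1^*$ come from \eqref{yopdz} together with the real-scalar multiples (by $-\gamma_q$) of \eqref{ef} and \eqref{ee}, summed over the common domain $\cD$. The only cosmetic difference is that for $\hat F_1\subset\hat E_1^*$ the paper cites \eqref{yopdz} and \eqref{ee} directly, whereas you deduce it from $\hat E_1\subset\hat F_1^*$ via $\hat F_1\subset\hat F_1^{**}\subset\hat E_1^*$ --- the same closability argument used at the end of the proof of Proposition \ref{P1}, and you note the direct derivation as well.
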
 
\begin{proof}
$\hat H_1 \subset \hat H_1^*$ follows from \eqref{qzdz} and \eqref{yydt},  
\,$\hat E_1\subset \hat F_1^*$ follows 
from \eqref{yopdz} and \eqref{ef}, 
and \,$\hat F_1\subset \hat E_1^*$ follows
from \eqref{yopdz} and \eqref{ee}. 
\end{proof}

\section{The Dirac operator} 

Classically, the left invariant vector fields $H$, $E$ and $F$ act as first order differential operators 
on differentiable functions on $\SU$. 
In the noncommutative case, we will use the actions of $\hat H$, $\hat E$ and $\hat F$ to define an algebra of
 differentiable functions. 
 
 For $\hat x_i = x_i^{\op} \ot \eta_i \in \cO^+(\oqd)^{\op}  \ot C^\infty (\S^1)$, $i=0,1,2$,  consider the action on 
 $f\ot \varphi  \in \dFq\ot C^{(1)}(\S^1)$ 
 given by 
\begin{align}  \nonumber 
(\hat x_0\hat T_0 + \hat x_1\hat T_1 &+ \hat x_2\hat T_2 )( f\ot \varphi) \\     \label{action}
 &:= y^{-1} f\hs x_0 \ot \eta_0 \hs \dt(\varphi) +   T_1(f) \hs x_1\ot \eta_1 \hs \varphi +   T_2(f)\hs x_2 \ot \eta_2 \hs \varphi, 
\end{align} 
where the right-hand side of
\eqref{action} is understood as an
unbounded operator on
$L_2(\oqd,\mu_\a) \,\bar\ot\,
L_2(\S^1)$ with domain of definition
containing the subspace $\cD$ introduced 
in (\ref{defbereich3}). 
Note that the operators $\hat H$, $\hat E$ and $\hat F$ are of the form 
described in \eqref{action}, and that the operators  $\hat x_i$ and $\hat T_i$ satisfy the assumptions of Lemma \ref{lem1}. 
We define 
\begin{align}  \nonumber 
&\Gamma^{(1)}(\mathrm{SU}_q(2)) 
 := \{ \phi \in \dFq \ot C^{(1)}(\S^1) : \\   \label{Gamma}
&\hspace{70pt} \hat H(\phi),\ \hat E(\phi), \ \hat F(\phi) ,\  \hat H(\phi^*),\ \hat E(\phi^*), \ \hat F(\phi^*) \ \text{are bounded}\}. 
\end{align} 
From \eqref{T1}, \eqref{T2}, \eqref{T0} and Lemma \ref{lem1}, it follows that 
\[ \label{hatT}
\hat T (\varphi \psi) = (\hat T\varphi)  \psi + \hat\s^1(\varphi) \hs ( \hat T\psi), 
\]
for all $\varphi, \psi \in \Gamma^{(1)}(\mathrm{SU}_q(2))$  and $\hat T\in \{\hat H, \hat E,\hat F\}$. In particular, 
$\hat T (\varphi \psi)$ is again bounded so that $\Gamma^{(1)}(\mathrm{SU}_q(2))$ is a *-algebra. 

Finally note that the classical limit of $\hat H$, $\hat E$ and $\hat F$ for $q\ra 1$  is  formally  $H$, $E$ and $F$, 
respectively. This will also be the case if we rescale $\hat E$ and $\hat F$ by a real number $c=c(q)$ such 
that $\lim_{q\ra 1} c(q)=1$. 
Such a rescaling might be useful in later computations of the spectrum of the Dirac operator. 

\begin{thm}   \label{Thm1} 
Let $\alpha =2$. 
Set $\hH := (L_2(\oqd,\mu_\a) \bar\ot L_2(\S^1))\oplus (L_2(\oqd,\mu_\a) \bar\ot L_2(\S^1))$ and define 
$$
\pi :\Gamma^{(1)}(\mathrm{SU}_q(2))  \lra B(\hH), \quad \pi(\phi):= \phi \oplus \phi 
$$
as left multiplication operators. Then, for any $c\in\R$,  the operator  
$$
D:=\left( \begin{array}{cc}
\hat H-2\ & c \hat E \\
c \hat F\ & -\hat H-2 \\
\end{array} \right)
$$
is symmetric on 
$\cD \oplus \cD$ with 
$\cD$ from (\ref{defbereich}).
Furthermore,  
$$
[D, \pi(\phi)]_{\hat\s^1} : =D\hs  \pi(\phi) - \pi(\hat\s^1(\phi)) \hs D
$$
is bounded for all $\phi\in \Gamma^{(1)}(\mathrm{SU}_q(2))$. 
\end{thm}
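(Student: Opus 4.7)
The claim splits into two independent parts: symmetry of $D$ on $\cD \oplus \cD$, and boundedness of each twisted commutator $[D, \pi(\phi)]_{\hat\s^1}$ for $\phi \in \Gamma^{(1)}(\mathrm{SU}_q(2))$. The symmetry is a direct matrix reassembly of Proposition~\ref{P1}. Since $2, c \in \R$, the inclusions $\hat H \subset \hat H^*$, $\hat F \subset \hat E^*$, $\hat E \subset \hat F^*$ yield that the diagonal entries $\pm\hat H - 2$ are symmetric and that the off-diagonal entries satisfy $c\hat F \subset (c\hat E)^*$ and $c\hat E \subset (c\hat F)^*$. An entrywise computation of $\langle D\Psi, \Psi'\rangle - \langle \Psi, D\Psi'\rangle$ for $\Psi, \Psi' \in \cD \oplus \cD$ then vanishes, giving $D \subset D^*$.

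For the bounded twisted commutators, the key observation is that each of $\hat H, \hat E, \hat F$ is presented in Proposition~\ref{P1} as a sum of elementary pieces of the form $\hat x \hat T$ with $\hat x \in \cO^+(\oqd)^{\op} \otimes C^\infty(\S^1)$ (built from $z^{\op}$, $z^{*\op}$, $y^{\op}$, $y^{-1}$, $\E^{\pm\im t}$) and $\hat T \in \{\hat T_0, \hat T_1, \hat T_2\}$. This is precisely the shape of the action \eqref{action}, and the common twist $\hat\s^1$ is the one for which $\hat T_0, \hat T_1, \hat T_2$ all satisfy the twisted Leibniz rule by \eqref{T1T2} and \eqref{T0}. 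The operators $\hat x$ manifestly commute with left multiplication by elements of $\Fq \otimes C(\S^1)$, so Lemma~\ref{lem1} applies summandwise and yields
\[
[\hat T, \phi]_{\hat\s^1}\psi \;=\; \hat T(\phi)\,\psi, \qquad \hat T \in \{\hat H, \hat E, \hat F\},
\]
for $\phi \in \Gamma^{(1)}(\mathrm{SU}_q(2))$ and $\psi \in \cD$, where $\hat T(\phi)$ is the image of $\phi$ under \eqref{action}. By the very definition \eqref{Gamma} of $\Gamma^{(1)}(\mathrm{SU}_q(2))$, each such $\hat T(\phi)$ extends to a bounded multiplication operator on $\hH$, so the commutator extends boundedly.

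Finally, $[D, \pi(\phi)]_{\hat\s^1}$ is assembled entrywise: the diagonal entries are $\pm[\hat H, \phi]_{\hat\s^1} \mp 2(\phi - \hat\s^1(\phi))$ and the off-diagonal ones are $c[\hat E, \phi]_{\hat\s^1}$ and $c[\hat F, \phi]_{\hat\s^1}$. The twisted commutators are bounded by the previous step, and $\phi - \hat\s^1(\phi)$ is bounded because both $\phi$ and $\hat\s^1(\phi)$ lie in $\Fq \otimes C(\S^1)$ and act by bounded multiplication. The main obstacle is essentially bookkeeping: checking that each summand of $\hat H, \hat E, \hat F$ satisfies the precise hypotheses of Lemma~\ref{lem1} on the concrete domain $\cD = \cD_0 \otimes C^{(1)}(\S^1)$ of \eqref{defbereich}. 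The conditions built into $\cD_0$, together with the boundedness requirement packaged into \eqref{Gamma}, are exactly what is needed to absorb the potentially unbounded factors $y^{-1}$ and $(y^{-1})^{\op}$ and turn the formal identities of Lemma~\ref{lem1} into legitimate operator equalities on $\hH$.
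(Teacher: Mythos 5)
Your proposal is correct and follows essentially the same route as the paper: symmetry is assembled entrywise from Proposition~\ref{P1}, and the bounded twisted commutator comes from the twisted Leibniz rule for $\hat H,\hat E,\hat F$ (Lemma~\ref{lem1} applied summandwise to the $\hat x\hat T$ pieces, i.e.\ identity \eqref{hatT}) together with the definition \eqref{Gamma} of $\Gamma^{(1)}(\mathrm{SU}_q(2))$. You are in fact slightly more careful than the paper's displayed computation, which silently drops the contribution $\mp 2(\phi-\hat\s^1(\phi))$ of the constant $-2$ on the diagonal; as you note, this term is bounded since $\phi$ and $\hat\s^1(\phi)$ act by bounded multiplication.
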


\begin{proof} 
$D\subset D^*$ follows from Proposition \ref{P1}. 
For all $\phi\in \Gamma^{(1)}(\mathrm{SU}_q(2))$ and $\psi_1\oplus\psi_2$ in the domain of $D$, 
\begin{align*}
[D, \pi(\phi)]_{\hat\s^1} (\psi_1\oplus\psi_2) 
&= \hat H (\phi \psi_1) - \hat\s^1(\phi) \hat H ( \psi_1) + c(\hat E (\phi \psi_2) - \hat\s^1(\phi) \hat E ( \psi_2))\\
 &\hspace{10pt} \oplus \,  c(\hat F (\phi \psi_1) - \hat\s^1(\phi) \hat F ( \psi_1))  
   -(\hat H (\phi \psi_2) - \hat\s^1(\phi) \hat H ( \psi_2))  \\
& =  (\hat H\phi) \psi_1 + c (\hat E \phi) \psi_2 \ \oplus \ c(\hat F \phi)  \psi_1 - (\hat H\phi )\psi_2
\end{align*} 
by \eqref{hatT}, so $[D, \pi(\phi)]_{\s^1}$ is bounded by the definition of $\Gamma^{(1)}(\mathrm{SU}_q(2))$. 
\end{proof}

\begin{thm} \label{Thm2} 
Let $\hH$ and $\pi$ be defined as in Theorem \ref{Thm1} but with the measure on $\disc_q$ 
given by setting $\alpha=1$ in \eqref{Tr}. 
Let $\hat H_1$, $\hat F_1$ and $\hat
E_1$ be defined as in Proposition \ref{P2} 
and $\Gamma^{(1)}(\mathrm{SU}_q(2))$ as in \eqref{Gamma} with $\hat H$, $\hat F$ and  $\hat E$  replaced 
by $\hat H_1$, $\hat F_1$ and $\hat E_1$, respectively.  
Then, for any $c\in\R$,  the operator  
$$
D_1:=\left( \begin{array}{cc}
\hat H_1-2\ & c \hat E_1 \\
c \hat F_1\ & -\hat H_1-2 \\
\end{array} \right)
$$
is symmetric on $\cD \oplus \cD$ with 
$\cD$ from (\ref{defbereich2}).
Furthermore, 
$$
[D_1, \pi(\phi)]_{\hat\s^2} : =D_1\hs  \pi(\phi) - \pi(\hat\s^2(\phi)) \hs D_1
$$
is bounded for all $\phi\in \Gamma^{(1)}(\mathrm{SU}_q(2))$. 
\end{thm}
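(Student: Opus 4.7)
The plan is to mirror the proof of Theorem~\ref{Thm1} closely. Symmetry $D_1\subset D_1^*$ will be read off entry-by-entry from Proposition~\ref{P2}: the diagonal entries $\pm\hat H_1-2$ are symmetric because $\hat H_1\subset\hat H_1^*$ and $-2$ is real, while for $c\in\R$ the off-diagonal entries $c\hat E_1$ and $c\hat F_1$ are adjoints of each other (up to containment) by $\hat E_1\subset \hat F_1^*$ and $\hat F_1\subset\hat E_1^*$. This takes care of the first assertion.

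For the boundedness of $[D_1,\pi(\phi)]_{\hat\s^2}$, I will decompose each of $\hat H_1,\hat E_1,\hat F_1$ into a sum of \emph{first-order} summands of the form $\hat x\hat S_i$ with $i\in\{0,1,2\}$ and $\hat S_i$ as in Section~\ref{sec2}, plus \emph{zeroth-order} multiplicative pieces and the scalar $-2$ on the diagonal. For the first-order parts, Section~\ref{sec2} establishes that $\hat S_0,\hat S_1,\hat S_2$ all obey the $\hat\s^2$-twisted Leibniz rule, and each prefactor $\hat x$ occurring in $\hat H_1,\hat E_1,\hat F_1$ (a product of right multiplications by elements of $\{z^\op,z^{*\op},y^\op,(y^\op)^2\}$ with multiplication by $\E^{\pm\im t}$ on the $L_2(\S^1)$-factor) commutes with left multiplication by $\A=\Fq\ot C(\S^1)$. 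Hence Lemma~\ref{lem1} applies and yields $[\hat x\hat S_i,\phi]_{\hat\s^2}\psi=\hat x(\hat S_i\phi)\psi$, and summing these over the matrix entries of $D_1$ produces the bounded operators $\hat H_1\phi$, $c\hat E_1\phi$, $c\hat F_1\phi$ (minus the zeroth-order pieces), bounded by the very definition of $\Gamma^{(1)}(\mathrm{SU}_q(2))$.

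The step I expect to require the most care is the treatment of the leftover pieces, which are genuinely new compared to Theorem~\ref{Thm1}. For a zeroth-order term of the form $A=z^{(\ast)\op}y^\op y^{-2}\E^{\mp\im t}$ appearing in $\hat E_1$ or $\hat F_1$, the right multiplications $z^{(\ast)\op},y^\op$ commute with left multiplication by $\phi$, the factor $\E^{\mp\im t}$ commutes with the commutative $C(\S^1)$-component of $\phi$, and $y^{-2}\phi=\hat\s^2(\phi)\,y^{-2}$ by the very relation defining $\s^2$ in \eqref{s}; combining these gives $A\phi=\hat\s^2(\phi)A$, so $[A,\phi]_{\hat\s^2}=0$. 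The scalars $-2$ on the diagonal contribute $[-2,\phi]_{\hat\s^2}=2(\hat\s^2(\phi)-\phi)$, which is bounded since $\phi$ and $\hat\s^2(\phi)$ both act as bounded multiplication operators (the twist $\s^2$ merely rescales $s,s^*$ by $q^{\mp 2}$ and fixes functions of $y$, so it preserves the subalgebra of bounded multiplications). Assembling the first-order and zeroth-order contributions yields boundedness of $[D_1,\pi(\phi)]_{\hat\s^2}$ for every $\phi\in\Gamma^{(1)}(\mathrm{SU}_q(2))$.
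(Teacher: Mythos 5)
Your overall route is the paper's: symmetry of $D_1$ is read off from Proposition \ref{P2} exactly as $D\subset D^*$ was read off from Proposition \ref{P1}, and the twisted commutator is computed by feeding the first-order summands $\hat x\hat S_i$ into Lemma \ref{lem1} via the $\hat\s^2$-twisted Leibniz rules of Section \ref{sec2}. Your explicit treatment of the genuinely new ingredients is also largely correct: the zeroth-order summands $A=z^{\op}y^\op y^{-2}\E^{-\im t}$ resp.\ $z^{*\op}y^\op y^{-2}\E^{\im t}$ do satisfy $A\hs\pi(\phi)=\pi(\hat\s^2(\phi))\hs A$ on $\cD$, so they drop out of $[D_1,\pi(\phi)]_{\hat\s^2}$, and the scalar $-2$ contributes the bounded term $2\big(\pi(\hat\s^2(\phi))-\pi(\phi)\big)$ (a contribution which the displayed computation for Theorem \ref{Thm1} drops as well, harmlessly).

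The gap is in your last step. Precisely because the zeroth-order pieces vanish from the twisted commutator but do not vanish from $\hat E_1(\phi)$ and $\hat F_1(\phi)$, your computation yields in the off-diagonal entries multiplication by $\hat E_1(\phi)-A_E(\phi)$ and $\hat F_1(\phi)-A_F(\phi)$, where for $\phi=\phi_1\ot\phi_2$ one has $A_E(\phi)=-\frac{\gamma_q}{2}\hs\s^2(\phi_1)\hs y^{-1}z\ot\E^{-\im t}\phi_2$ and similarly for $A_F(\phi)$ with $y^{-1}z^*$. The definition \eqref{Gamma} (with $\hat E,\hat F$ replaced by $\hat E_1,\hat F_1$) only guarantees boundedness of the full elements $\hat E_1(\phi)$, $\hat F_1(\phi)$; it says nothing about the difference, and the subtracted piece contains the unbounded factor $y^{-1}z$ (resp.\ $y^{-1}z^*$), so ``bounded by the very definition of $\Gamma^{(1)}(\mathrm{SU}_q(2))$'' is not justified as stated. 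To close this you must show that $\phi\in\Gamma^{(1)}(\mathrm{SU}_q(2))$ forces $A_E(\phi)$ and $A_F(\phi)$ to be bounded. This is true but needs an argument: writing $\phi=\sum_j\phi_1^j\ot\phi_2^j$ with $\phi_2^j$ linearly independent and noting that the term $-q^{-1}\dbz(\phi_1^j)\hs y\ot\E^{-\im t}\phi_2^j$ is automatically bounded, boundedness of $\hat E_1(\phi)$ amounts to boundedness of $-\gamma_q\sum_j \s^2(\phi_1^j)\hs y^{-1}z\ot\E^{-\im t}\big(\im\dt+\frac{1}{2}\big)\phi_2^j$; since $\im\dt+\frac{1}{2}$ has trivial kernel on periodic functions, the second legs remain linearly independent, and a localisation (or Fourier-mode) argument in the $L_2(\S^1)$ factor then shows that each coefficient $\s^2(\phi_1^j)\hs y^{-1}z$ is bounded, whence $A_E(\phi)$ is bounded; alternatively one can simply build this requirement into the definition of $\Gamma^{(1)}(\mathrm{SU}_q(2))$ used in Theorem \ref{Thm2}. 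Be aware that the paper's own one-line proof (``essentially the same as the previous theorem'') is silent on exactly this point — the analogue of \eqref{hatT} fails for $\hat E_1,\hat F_1$ precisely because of their zeroth-order terms — so you have correctly isolated the delicate step, but you have not actually closed it.
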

\begin{proof} 
Using the results for $\hat S_0$, \,$\hat S_1$ and \,$\hat S_2$ from Section \ref{sec2} 
and Proposition \ref{P2}, 
the proof is essentially the same as the proof of the previous theorem. 
\end{proof}

To view the Dirac operator of Theorem
\ref{Thm2} as a deformation of the classical Dirac operator, 
one may choose a continuously varying positive real number $\gamma_q$ such that $\lim_{q\ra 1}\gamma_q=\half$. 
For instance, if $\gamma_q := \frac{q}{1+q}$, then the Dirac operator of Theorem \ref{Thm2} resembles the one 
of Theorem \ref{Thm1}, the main difference being the additional functions (0-order differential operators) 
in the definitions of $\hat E_1$ and $\hat F_1$. In the classical case $q=1$, the operator $D_1$ can be obtained 
from the Dirac operator $D$ in Theorem \ref{Thm1} by the ``gauge transformation'' 
$D_1= \sqrt{y}\hs D  \sqrt{y}^{-1}$. 

On the other hand, if one rescales the 
volume form to $\mathrm{vol}_1 := \frac{1}{y}\hs \mathrm{vol}$ with a non-constant 
function $y$ without changing the Riemannian metric, 
then the  Dirac operator ceases to be self-adjoint 
but the above gauge transformed Dirac operator will remedy the problem. 
To see this, let $f,g, \sqrt{y}^{-1}\hsp f,\sqrt{y}^{-1}\hsp g \in \dom(D)$. Then
\begin{align*}
&\ip{\sqrt{y}\hs D  \sqrt{y}^{-1}\hsp f}{g}_{L_2(S,\frac{1}{y} \mathrm{vol})}  
= \int \ip{\sqrt{y}\hs D  \sqrt{y}^{-1}\hsp f}{g}\frac{1}{y}\hs\dd\mathrm{vol}
= \int \ip{ D  \sqrt{y}^{-1}\hsp f}{ \sqrt{y}^{-1}\hsp g} \dd\mathrm{vol}\\
&\quad= \int \ip{  f}{  \sqrt{y} D\sqrt{y}^{-1\hsp }g} \frac{1}{y}\dd\mathrm{vol}
= \ip{f}{\sqrt{y}\hs D  \sqrt{y}^{-1}\hsp  g}_{L_2(S,\frac{1}{y} \mathrm{vol})}. 
\end{align*}
For this reason and in view of \eqref{measure}, we may regard $D_1$ as the Dirac operator 
obtained from $D$ by rescaling the volume form 
$y \hs \dd_q y\hs \dd \phi\,\mapsto\,\dd_q y\hs \dd \phi$.

\section*{Acknowledgements}
This work is part of the project QUANTUM DYNAMICS and was partially supported by the grant H2020-MSCA-RISE-2015-691246, 
the Polish Government grant 3542/H2020/2016/2 and CIC-UMSNH.


\begin{thebibliography}{99.}

\bibitem{CP} 
P. S. Chakraborty, A. Pal, 
\textit{Equivariant spectral triples on the quantum SU(2) group.} 
K-Theory \textbf{28} (2003), 107--126. 

\bibitem{C-ihes} 
A. Connes, \textit{Noncommutative differential geometry.} 
IHES Publ. Math. \textbf{62} (1985), 257--360.

\bibitem{C-buch} 
A. Connes, \textit{Noncommutative geometry}. Academic Press, 1994. 

\bibitem{C-index} 
A. Connes, 
\textit{Cyclic cohomology, quantum group symmetries and the local index formula for SUq(2).} 
J. Inst. Math. Jussieu \textbf{3} (2004), 17--68. 

\bibitem{CM}  
A. Connes, H. Moscovici, \textit{Type III and spectral triples.}   Aspects Math. \textbf{E38} (2008), 57--71.  

\bibitem{DLSSV1}  
L. Dabrowski, G. Landi, A. Sitarz, W. van Suijlekom, J. C. V\'arilly, 
The Dirac operator on SUq(2). Comm. Math. Phys. \textbf{259} (2005), 729--759.

\bibitem{DLSSV2}  
L. Dabrowski, G. Landi, A. Sitarz, W. van Suijlekom, J. C. V\'arilly, 
\textit{The local index formula for SUq(2).}  
K-Theory \textbf{35} (2005),  375--394 

\bibitem{F}  
T. Friedrich, 
\textit{Dirac Operators in Riemannian Geometry.}  
Graduate Studies in Mathematics 25, AMS, 2000. 

\bibitem{HW} 
P. M. Hajac, E. Wagner, \textit{The pullbacks of principal coactions.} 
Doc. Math. \textbf{19} (2014), 1025--1060. 

\bibitem{H} 
I. Heckenberger, 
\textit{Spin geometry on quantum groups via covariant differential calculi.}  
Adv. Math. \textbf{175} (2003), 197--242. 

\bibitem{KL}
S. Klimec, A. Lesniewski, \textit{A two-parameter quantum deformation of the unit disc.}
J. Funct. Anal. \textbf{155} (1993), 1--23.

\bibitem{KS} 
A. U. Klimyk, K. Schm\"udgen, 
\textit{Quantum Groups and their Representations.} 
Texts and Monographs in Physics, Springer, 1998.  

\bibitem{KRS} 
U. Kr\"ahmer, A. Rennie, R. Senior, 
\textit{A residue formula for the fundamental Hochschild 3-cocycle for SUq(2).} 
 J. Lie Theory \textbf{22} (2012), 557--585. 

\bibitem{MNW}  
T. Masuda, Y. Nakagami, J. Watanabe,   
\textit{Noncommutative Differential Geometry on the Quantum SU(2), 
I: An Algebraic Viewpoint.}  
K-Theory \textbf{4} (1990), 157--180 . 

\bibitem{NT} 
S. Neshveyev, L. Tuset, 
\textit{The Dirac operator on compact quantum groups.} 
J. Reine Angew. Math. \textbf{641} (2010), 1--20. 

\bibitem{NT1} 
S. Neshveyev, L. Tuset, 
\textit{K-homology class of the Dirac operator on a compact quantum group.} 
Doc. Math. \textbf{16} (2011), 767--780. 
 
\bibitem{S} 
K. Schm\"udgen, 
\textit{Commutator representations of differential calculi on the quantum group SUq(2).} 
 J. Geom. Phys. \textbf{31} (1999), 241--264. 
 
\bibitem{V} 
N. L. Vasilevski, 
\textit{Commutative Algebras of Toeplitz Operators on the Bergman Space.}  
Operator Theory: Advances and Applications 185, 
Birkh\"auser, 2008. 

\bibitem{V-BCc} 
C. Voigt, \textit{Quantum SU(2) and the Baum-Connes conjecture.}  
Banach Center Publ. \textbf{98} (2012), 417--432. 

\bibitem{W} 
E. Wagner, \textit{Representations of quantum SU(2) operators on a local chart.}  	
J. Phys. Conf. Ser. \textbf{670} (2016), 012053. 



\end{thebibliography}
\end{document}